\definecolor{refkey}{rgb}{0,0,1}
\definecolor{labelkey}{rgb}{1,0,0}
\theoremstyle{plain}
\newtheorem{theorem}{Theorem}[chapter]
\newtheorem{lemma}[theorem]{Lemma}
\newtheorem{proposition}[theorem]{Proposition}
\theoremstyle{remark}
\newtheorem{remark}[theorem]{Remark}
\numberwithin{equation}{chapter}
\newenvironment{phantomequation}[1][]{\refstepcounter{equation}}{}
\newcounter{note}
\newcommand{\scl}{\mathsf{scl}}
\newcommand{\bR}{\mathbb{R}}
\newcommand{\bZ}{\mathbb{Z}}
\newcommand{\sub}{\operatorname{sub}}
\newcommand{\vol}{\operatorname{vol}}
\newcommand\arccot{\operatorname{arccot}}
\newcommand{\N}{{\mathsf{N}}}
\newcommand\RR{\mathbb{R}}
\newcommand\CC{\mathbb{C}}
\newcommand{\Def}{\mathrel{\mathop:}=}
\renewcommand{\Im}{\operatorname{Im}}
\renewcommand{\Re}{\operatorname{Re}}
\begin{document}

\addtopsmarks{headings}{}{%
\createmark{chapter}{right}{shownumber}{}{. \ }
}
\pagestyle{headings}

\title{Spectral asymptotics for the semiclassical Dirichlet to Neumann operator\thanks{\emph{2010 Mathematics Subject Classification}: 35P20, 58J50.}\thanks{\emph{Key words and phrases}: Dirichlet-to-Neumann operator, semiclassical Dirichlet-to-Neumann operator,  spectral asymptotics.}
}

\author{Andrew Hassell\thanks{This research was supported in part by Australian Research Council Discovery Grant  DP120102019}
,  Victor Ivrii\thanks{This research was supported in part by National Science and Engineering  Research Council (Canada) Discovery Grant  RGPIN 13827}}



\maketitle

\begin{abstract}
Let $M$ be a compact Riemannian manifold with smooth boundary, and let $R(\lambda)$ be the Dirichlet-to-Neumann operator at frequency $\lambda$. The semiclassical Dirichlet-to-Neumann operator $R_{\scl}(\lambda)$ is defined to be $\lambda^{-1} R(\lambda)$. We obtain a leading asymptotic for the spectral counting function for $R_{\scl}(\lambda)$ in an interval $[a_1, a_2)$ as $\lambda \to \infty$, under the assumption that the measure of periodic billiards on $T^*M$ is zero.
The asymptotic takes the form 
\begin{equation*}
\N(\lambda; a_1,a_2) = \bigl( \kappa(a_2)-\kappa(a_1)\bigr)\vol'(\partial M) \lambda^{d-1}+o(\lambda^{d-1}),
\label{eq-1-19}
\end{equation*}
where $\kappa(a)$ is given explicitly by
\begin{multline*}
\kappa(a) = \frac{\omega_{d-1}}{(2\pi)^{d-1}} \bigg( -\frac{1}{2\pi} \int_{-1}^1 (1 - \eta^2)^{(d-1)/2}  \frac{a}{a^2 + \eta^2}  \,  d\eta  \\
- \frac{1}{4} + H(a)   (1+a^2)^{(d-1)/2} \bigg) .
\label{eq-1-20}
\end{multline*}
\end{abstract}

\chapter{Introduction}
\label{sect-1}

Let $M$ be a Riemannian manifold with boundary. The Dirichlet-to-Neumann operator is a family of operators defined on $L^2({\partial M})$ depending on the parameter $\lambda \geq 0$. It is defined as follows: given $f \in L^2({\partial M})$, we solve the equation (if possible)
\begin{equation}
(\Delta - \lambda^2) u = 0 \quad \text{on\ \ } M, \qquad u |_{{\partial M}} = f.
\label{eq-1-1}
\end{equation}
Then the Dirichlet-to-Neumann operator at frequency $\lambda$ is
the map
\begin{equation}
R (\lambda) : f \mapsto -\partial_\nu u |_{{\partial M}}.
\label{eq-1-2}
\end{equation}
Here $\partial_\nu$ is the interior unit normal derivative, and $\Delta$ is the positive Laplacian on $M$.

It is well known that $R(\lambda)$ is a self-adjoint, semi-bounded from below pseudodifferential operator of order $1$ on $L^2({\partial M})$, with domain $H^1(\partial M)$. It therefore has discrete spectrum accumulating only at $+\infty$.
The Dirichlet-to-Neumann operator and closely related operators are important in a number of areas of mathematical analysis including inverse problems (such as Calder\'on's problem \cite{Cal}), domain decomposition problems (such as the determinant gluing formula of Burghelea-Friedlander-Kappeler \cite{BFK}), and spectral asymptotics (see e.g. \cite{Fried}). 

In this paper, we are interested in the spectral asymptotics of $R(\lambda)$ in the high-frequency limit, $\lambda \to \infty$. Let us recall standard spectral asymptotics for elliptic differential operators, for simplicity in the simplest case of a positive self-adjoint second order scalar operator. Suppose that $Q$ is such an operator on a manifold $M$ of dimension $d$, with principal symbol $q$. Then in the case that $M$ is closed, we have an asymptotic for the number $\N(\lambda)$ of eigenvalues of $Q$ (counted with multiplicity) less than $\lambda^2$:
\begin{multline}
\N(\lambda) = (2\pi)^{-d} \vol \{ (x, \xi) \in T^* M \mid q(x, \xi) \leq \lambda^2 \} + O(\lambda^{d-1}) \\
= \big( \frac{\lambda}{2\pi} \big)^d \vol \{ (x, \xi) \in T^* M \mid q(x, \xi) \leq 1 \} + O(\lambda^{d-1}),
\label{eq-1-3}
\end{multline}
where the equality of the two expressions on the RHS is a simple consequence of the homogeneity of $q$. Moreover, if the set of periodic geodesics has measure zero, then there is a two-term expansion of the form
\begin{equation*}
\bigl( \frac{\lambda}{2\pi} \bigr)^d \vol \{ (x, \xi) \in T^* M \mid q(x, \xi) \leq 1 \} +
\frac{\lambda^{d-1}}{(2\pi)^{d}} \int_{ \{ q = 1 \}} \sub(Q) + o(\lambda^{d-1})
\end{equation*}
where $\sub(Q)$ is the subprincipal symbol of $Q$ \cite{DuGu}. This was generalised to the case of manifolds with boundary by the second author \cite{Ivr80}. For simplicity we state the result in the case that $Q = \Delta$ is the (positive) metric Laplacian, which satisfies $\sub(\Delta) = 0$. Then $\Delta$ is a self-adjoint operator under either Dirichlet $(-)$ or Neumann $(+)$ boundary conditions, and if the set of periodic generalised bicharacteristics has measure zero, we get a two-term expansion for $\N_\Delta(\lambda)$  of the form
\begin{equation}
\bigl( \frac{\lambda}{2\pi} \bigr)^d \vol B^* M \pm
\frac{1}{4} \big(\frac{\lambda}{2\pi} \big)^{d-1} \vol B^* \partial M + o(\lambda^{d-1}).
\label{eq-1-4}\end{equation}

These statements can be generalized to the semiclassical setting. Consider a classical Schr\"odinger operator on $M$, $P = h^2 \Delta + V(x) - 1$, where $h > 0$ is a small parameter (``Planck's constant'') and $V$ is a smooth real-valued function. We consider the asymptotic behaviour $\N^-_h(P)$ of the number of negative eigenvalues of $P$ as $h \to 0$.
This is equivalent to the problem above if $h = \lambda^{-1}$ and $V$ is identically zero. Define $p(x, \xi)$ to be the semiclassical symbol of $P$, i.e. $p = |\xi|^2_{g(x)} + V(x) - 1$. Then, if $M$ is closed, under the assumption that the measure of periodic bicharacteristics of $P$ is zero in $T^* M$, and that $0$ is a regular value for $p$, we have
\begin{equation}
\N^-_h(P) =
(2\pi h)^{-d} \vol \{ (x, \xi) \in T^* M \mid p(x, \xi) \leq 0 \}  + O(h^{1-d}).
\label{eq-1-5}
\end{equation}
Moreover, for manifolds with boundary, we have an analogue of \eqref{eq-1-4}:
under either Dirichlet $(-)$ or Neumann $(+)$ boundary conditions, if the set of periodic generalised bicharacteristics has measure zero, we get a two-term expansion for $\N^-_h(P)$ (where here we understand the self-adjoint realization of $P$ with either Dirichlet or Neumann boundary condition)  of the form
\begin{equation}
(2\pi h)^{-d} \vol \{ (x, \xi) \in T^* M \mid p(x, \xi) \leq 0 \}  \pm \frac{1}{4} (2\pi h)^{1-d} \vol  \mathcal{H}  + o(h^{1-d}),
\label{eq-1-6}\end{equation}
where $\mathcal{H} \subset T^* (\partial M)$ is the hyperbolic region in the boundary, that is, the projection of the set
$\{(x,\xi) \mid p(x, \xi) \leq 0\} \cap T^*_{\partial M} M$ to $T^* \partial M$.

From the semiclassical point of view, since $R(\lambda)$ is a first order operator, it makes sense to consider $R_\scl(\lambda) := \lambda^{-1} R(\lambda)$ (for $\lambda > 0$), which we call the semiclassical Dirichlet-to-Neumann operator. Like $R(\lambda)$, it is a self-adjoint, semi-bounded from below operator on $L^2({\partial M})$, with discrete spectrum accumulating only at $+\infty$. The goal of this paper is to investigate the spectral asymptotics of $R_\scl(\lambda)$, that is, the asymptotics of
\begin{equation}
\N(\lambda; a_1,a_2)\Def  \#\{\beta:\, \beta \text{\  is an eigenvalue of \  } R_\scl (\lambda),\ a_1\le \beta <a_2\},
\label{eq-1-7}
\end{equation}
the number of eigenvalues of $R_\scl(\lambda)$ in the interval $[a_1, a_2)$, as $\lambda \to \infty$.

Both $R(\lambda)$ and $R_\scl(\lambda)$ have the disadvantage that they are undefined whenever $\lambda^2$ is a Dirichlet eigenvalue, since then \eqref{eq-1-1} is not solvable for arbitrary $f \in H^1(M)$. Indeed, when $\lambda^2$ is a Dirichlet eigenvalue, a necessary condition for solvability of (\ref{eq-1-1}) is that $f$ is orthogonal to the normal derivatives of Dirichlet eigenfunctions at frequency $\lambda$. To overcome this issue, we introduce the \emph{Cayley transform} of $R_\scl(\lambda)$: we define
\begin{equation}
C(\lambda) =  (R_\scl (\lambda)- i)(R_\scl (\lambda)+ i)^{-1}.
\label{eq-1-8}
\end{equation}
This family of operators is related to impedance boundary conditions: we have
$C(\lambda) f = g$ if and only if there is a function $u$ on $M$ satisfying 
\begin{gather}
(\Delta - \lambda^2) u = 0
\label{eq-1-9}\\
\shortintertext{and}
\frac{1}{2} (\lambda^{-1} \partial_\nu u- i u) = f,
\qquad \frac{1}{2} (\lambda^{-1} \partial_\nu  u + i u) = g.
\label{eq-1-10-*}\tag*{$\textup{(\ref*{eq-1-10})}_{1,2}$}
\end{gather}
\begin{phantomequation}\label{eq-1-10}\end{phantomequation}
As observed in \cite{BH}, $C(\lambda)$ is a well-defined analytic family of operators for $\lambda$ in a neighbourhood of the positive real axis, which is unitary on the real axis. In particular, it is well-defined even when $\lambda^2$ is a Dirichlet  eigenvalue of the Laplacian on $M$. As a unitary operator, $C(\lambda)$, $\lambda > 0$, has its spectrum on the unit circle, and as $R_\scl(\lambda)$ has discrete spectrum accumulating only at $\infty$, it follows that the spectrum of $C(\lambda)$ is discrete on the unit circle except at the point $1$. Our question can be formulated in terms of $C(\lambda)$: given two angles $\theta_1, \theta_2$ satisfying $0 < \theta_1 < \theta_2 < 2\pi$, what is the leading asymptotic for
\begin{equation}
\tilde\N(\lambda; \theta_1,\theta_2)\Def \# \{ e^{i\theta}:\, e^{i\theta} \text{\  is an eigenvalue of \  } C(\lambda),\ \theta_1\le \theta <\theta_2\}
\label{eq-1-11}
\end{equation}
the number of eigenvalues of $C(\lambda)$ in the interval $\{ e^{i\theta}:\, \theta \in [\theta_1, \theta_2) \}$ of the unit circle, as $\lambda \to \infty$. Clearly, we have
\begin{equation}
\tilde{\N}(\lambda; \theta_1,\theta_2) =  \N(\lambda; a_1,a_2), \
\text{where } \ e^{i\theta_j} = \frac{a_j - i}{a_j + i}, \text{ i.e. }
a_j = - \cot \bigl(\frac{\theta_j}{2}\bigr) .
\label{eq-1-12}
\end{equation}
To answer this question we relate it to a standard semiclassical eigenvalue counting problem on $M$. To state the next result, we first define the self-adjoint operator $P_{a,h}$ on $L^2(M)$ by
\begin{gather}
\mathfrak{D}(P_{a,h})= \{ u \in H^2(M) :\,  (h \partial_\nu   + a ) u = 0 \text{\ \ at\ \ } {\partial M}  \},
\label{eq-1-13}\\
P_{a,h}(u) = (h^2 \Delta - 1) u, \quad u \in \mathfrak{D}(P_{a,h}).
\label{eq-1-14}
\end{gather}
It is the self-adjoint operator associated to the semi-bounded quadratic form
\begin{equation}
h^2 \|\nabla u\|_M ^2 - \|u\|_M ^2   - h a  \|u\|^2_{\partial M} .
\label{eq-1-15}
\end{equation}
The operator $P_{a,h}$ is linked with the semiclassical Dirichlet-to-Neumann operator as follows:  if $f$ is
an eigenfunction of $R_\scl(\lambda)$ with eigenvalue $a$,  then the corresponding Helmholtz function $u$ defined by \eqref{eq-1-1} is in the domain \eqref{eq-1-13} of $P_{a,h}$, and $P_{a,h} u = 0$ (where $h = \lambda^{-1}$).

Then we have  the following result, proved in Section~\ref{sect-3}.

\begin{proposition}\label{prop-1-1}
Let $h = \lambda^{-1}$.  Assume $0 < \theta_1 < \theta_2 < 2\pi$.  Then the number of eigenvalues of $C(\lambda)$ in the interval
$J_{\theta_1, \theta_2} := \{ e^{i\theta}:\, \theta \in [\theta_1, \theta_2) \}$  is equal to
\begin{gather}
\tilde{\N}(\lambda; \theta_1,\theta_2) = \N(\lambda; a_1,a_2)= \N_h^- (a_2) - \N^-_h(a_1) ,
\label{eq-1-16}\\
\intertext{where $a_j = - \cot (\theta_j/2)$ and}
\N_h^- (a)\Def \# \{\mu:\, \mu \text{\ is an eigenvalue of\ \ } P_{a,h} ,\ \mu <0\}
\label{eq-1-17}.
\end{gather}
\end{proposition}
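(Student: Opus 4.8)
The plan is to keep $h=\lambda^{-1}$ fixed and to track the self-adjoint family $P_{a,h}$ as the Robin parameter $a$ runs over $\bR$. Since the quadratic form \eqref{eq-1-15} decreases in $a$, the count $\N_h^-(a)$ of negative eigenvalues is non-decreasing and changes only by jumps; I will show that the jump of $\N_h^-$ at a value $a_0$ equals $\dim\ker P_{a_0,h}$, that this kernel is canonically isomorphic to the $e^{i\theta_0}$-eigenspace of $C(\lambda)$ with $a_0=-\cot(\theta_0/2)$, and then deduce \eqref{eq-1-16} by summing the jumps over $a_0\in[a_1,a_2)$, equivalently over $\theta_0\in[\theta_1,\theta_2)$.

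First I would collect the basic properties of $P_{a,h}$. A trace inequality $\|u\|_{\partial M}^2\le\epsilon\|\nabla u\|_M^2+C_\epsilon\|u\|_M^2$ shows that \eqref{eq-1-15} is semibounded and closed on $H^1(M)$, so $P_{a,h}$ is self-adjoint with compact resolvent and each $\N_h^-(a)$ is finite. For fixed $u$, \eqref{eq-1-15} is affine in $a$ with non-positive slope $-h\|u\|_{\partial M}^2$, so by the min-max principle every eigenvalue of $P_{a,h}$ is non-increasing in $a$; moreover $\{P_{a,h}\}$ is a self-adjoint holomorphic family of type (B) (the form domain $H^1(M)$ is fixed and the form is affine in $a$), so the eigenvalues organize into real-analytic branches $\mu_k(a)$ with $\mu_k'(a)=-h\|u_k(a)\|_{\partial M}^2$ for a normalized eigenfunction $u_k(a)$. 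No branch can vanish identically: if $\mu_k\equiv0$ on an interval, then $\mu_k'\equiv0$ forces $u_k(a)|_{\partial M}=0$, whence the Robin condition gives $\partial_\nu u_k(a)|_{\partial M}=0$ as well, and unique continuation for $(\Delta-\lambda^2)u_k(a)=0$ (vanishing Cauchy data) yields $u_k(a)\equiv0$, a contradiction. Hence each branch meets $0$ only at isolated points and, being non-increasing, passes there from positive to negative values. Consequently $a\mapsto\N_h^-(a)$ is a non-decreasing, left-continuous, integer-valued step function with only finitely many jumps in $[a_1,a_2]$, and its jump at any $a_0$ equals $\dim\ker P_{a_0,h}$.

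Next I would identify $\dim\ker P_{a_0,h}$, for $a_0=-\cot(\theta_0/2)$ with $\theta_0\in(0,2\pi)$ (so $e^{i\theta_0}\ne1$), with the multiplicity of $e^{i\theta_0}$ as an eigenvalue of $C(\lambda)$. Given $0\ne u\in\ker P_{a_0,h}$, the Robin condition reads $\lambda^{-1}\partial_\nu u=-a_0u$ on $\partial M$, so $f:=\tfrac12(\lambda^{-1}\partial_\nu u-iu)|_{\partial M}=\tfrac12(-a_0-i)\,u|_{\partial M}$ and $g:=\tfrac12(\lambda^{-1}\partial_\nu u+iu)|_{\partial M}=\tfrac{a_0-i}{a_0+i}\,f=e^{i\theta_0}f$; by \eqref{eq-1-9}--\eqref{eq-1-10} this says $C(\lambda)f=e^{i\theta_0}f$, and $f=0$ would force $u|_{\partial M}=0$ and hence $u\equiv0$ by unique continuation as above, so $u\mapsto f$ is injective. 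Conversely, $C(\lambda)$ being well defined (cf.\ \cite{BH}), an eigenfunction $C(\lambda)f=e^{i\theta_0}f$ produces via \eqref{eq-1-9}--\eqref{eq-1-10} (with $g=e^{i\theta_0}f$) a solution $u$ with $u|_{\partial M}=-i(e^{i\theta_0}-1)f\ne0$ and $\lambda^{-1}\partial_\nu u|_{\partial M}=(1+e^{i\theta_0})f$; since $\tfrac{1+e^{i\theta_0}}{1-e^{i\theta_0}}=i\cot(\theta_0/2)$ one gets $\lambda^{-1}\partial_\nu u=-a_0u$ on $\partial M$, and by elliptic regularity ($f$ is smooth) $u\in H^2(M)$, so $u\in\ker P_{a_0,h}$ and $u\mapsto f$. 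Thus $u\mapsto\tfrac12(\lambda^{-1}\partial_\nu u-iu)|_{\partial M}$ is a linear isomorphism of $\ker P_{a_0,h}$ onto the $e^{i\theta_0}$-eigenspace of $C(\lambda)$ (when $\lambda^2$ is not a Dirichlet eigenvalue this is simply $u\mapsto u|_{\partial M}$ onto the $a_0$-eigenspace of $R_\scl(\lambda)$), so the jump of $\N_h^-$ at $a_0$ is $\dim\ker(C(\lambda)-e^{i\theta_0})$.

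Finally, $\theta\mapsto-\cot(\theta/2)$ is an increasing bijection from $(0,2\pi)$ onto $\bR$, so the jump points $a_0\in[a_1,a_2)$ correspond bijectively and order-preservingly to the $\theta_0\in[\theta_1,\theta_2)$, and summing the jumps gives
\begin{equation*}
\N_h^-(a_2)-\N_h^-(a_1)=\sum_{\theta_0\in[\theta_1,\theta_2)}\dim\ker\bigl(C(\lambda)-e^{i\theta_0}\bigr)=\tilde{\N}(\lambda;\theta_1,\theta_2)=\N(\lambda;a_1,a_2),
\end{equation*}
the last equality by \eqref{eq-1-12}; this is \eqref{eq-1-16}. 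The step I expect to require the most care is the kernel identification together with the non-lingering of the branches at $0$: both rest on the unique continuation statement that a solution of $(\Delta-\lambda^2)u=0$ with vanishing Cauchy data on $\partial M$ is identically zero, and one must use the correspondence with $C(\lambda)$ rather than with $R_\scl(\lambda)$ exactly at those $\lambda$ for which $\lambda^2$ is a Dirichlet eigenvalue and $R_\scl(\lambda)$ degenerates.
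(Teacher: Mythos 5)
Your proposal is correct and follows essentially the same route as the paper: monotone (in $a$) eigenvalue branches of $P_{a,h}$, strict passage through zero justified by the vanishing-Cauchy-data/unique-continuation argument, counting the crossings on $[a_1,a_2)$ (this is exactly the Birman--Schwinger principle the paper cites, which you re-derive via Kato's analytic perturbation theory), and the same identification of $\ker P_{a_0,h}$ with the $e^{i\theta_0}$-eigenspace of $C(\lambda)$ via $u\mapsto\tfrac12(h\partial_\nu u-iu)|_{\partial M}$. Your treatment is somewhat more detailed than the paper's (explicit surjectivity computation and the left-continuity bookkeeping), but it is not a different method.
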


Having thus reduced the problem to a standard question about semiclassical spectral asymptotics, we obtain (after some calculations in Section~\ref{sect-4}) our main result.

\begin{theorem}\label{thm-1-2}
\begin{enumerate}[fullwidth, label=(\roman*)]
\item\label{thm-1-2-i}
The following estimate for the quantity \eqref{eq-1-11} holds:
\begin{equation}
\N(\lambda; a_1,a_2) = O(\lambda^{d-1});
\label{eq-1-18}
\end{equation}
\item\label{thm-1-2-ii}
Further, if the set of periodic billiards on $M$ has measure $0$ then the following asymptotic holds as  $\lambda \to +\infty$:
\begin{equation}
\N(\lambda; a_1,a_2) = \bigl( \kappa(a_2)-\kappa(a_1)\bigr)\vol'(\partial M) \lambda^{d-1}+o(\lambda^{d-1}),
\label{eq-1-19}
\end{equation}
where $\kappa(a)$ is given explicitly by
\begin{multline}
\kappa(a) = \frac{\omega_{d-1}}{(2\pi)^{d-1}} \bigg( -\frac{1}{2\pi} \int_{-1}^1 (1 - \eta^2)^{(d-1)/2}  \frac{a}{a^2 + \eta^2}  \,  d\eta  \\
- \frac{1}{4} + H(a)   (1+a^2)^{(d-1)/2} \bigg) .
\label{eq-1-20}
\end{multline}
Here  $H(\cdot)$ is the Heaviside function,
$\omega_d$ is the volume of the unit ball in $\bR^d$, and $\vol (M)$ and $\vol'(\partial M)$ are $d$-dimensional volume of $M$ and $(d-1)$-dimensional volume of $\partial M$  respectively.

\item\label{thm-1-2-iii} In the case $d=3$, we can evaluate this integral exactly and we find that
\begin{equation}
\kappa (a)=\frac{1}{4\pi}\Bigl(-\frac{1}{4}-\frac{1}{\pi}\arccot (a) (1+a^2)+(1+a^2)+ \frac{1}{\pi}a\Bigr)
\label{eq-1-21}
\end{equation}
where $\arccot$ has range $(0,\pi)$.
This is simpler expressed in terms of $\theta$. Defining $\tilde\kappa(\theta) = \kappa(a)$ where $a = -\cot(\theta/2) = \cot(\pi - \theta/2)$, we have (still under the zero-measure assumption on periodic billiards)
\begin{gather}
\tilde{\N}(\lambda; \theta_1,\theta_2) = \bigl( \tilde{\kappa}(\theta_2)-\tilde\kappa(\theta_1)\bigr)\vol'(\partial M) \lambda^{2}+o(\lambda^{2}),
\label{eq-1-22}\\[2pt]
\tilde{\kappa}(\theta) = \frac{1}{4\pi} \Big( -\frac{1}{4} + \frac{1}{2\pi} \big( \frac{\theta - \sin \theta}{\sin^2(\theta/2)} \big) \Big).
\label{eq-1-23}\end{gather}
\end{enumerate}
\end{theorem}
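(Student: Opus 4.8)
The plan is to use Proposition~\ref{prop-1-1} to turn the problem into a standard semiclassical eigenvalue count for the Robin-type operator $P_{a,h}$, and then to invoke the sharp two-term semiclassical Weyl asymptotics with boundary; the only genuinely new ingredient is the boundary contribution of the $h$-dependent boundary condition $(h\partial_\nu+a)u|_{\partial M}=0$. First I would record that the semiclassical principal symbol of $P_{a,h}$ is $p(x,\xi)=|\xi|^2_{g(x)}-1$, which is \emph{independent of $a$}, has $0$ as a regular value, and satisfies $\vol\{(x,\xi)\in T^*M:\,p\le0\}=\omega_d\vol(M)$. The one-term semiclassical Weyl law with boundary---valid for the self-adjoint realization of $P_{a,h}$ under the elliptic condition \eqref{eq-1-13}, with no dynamical hypothesis---gives $\N^-_h(a)=(2\pi h)^{-d}\omega_d\vol(M)+O(h^{1-d})$; the leading term is $a$-independent, so \eqref{eq-1-16} yields \eqref{eq-1-18}, i.e.\ part~\ref{thm-1-2-i}. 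Under the assumption that periodic billiards on $T^*M$ have measure zero---these being exactly the periodic generalized bicharacteristics of $P_{a,h}$ on $\{p=0\}$, since propagation of singularities for $h^2\Delta-1$ with this boundary condition follows the ordinary billiard flow---the sharp two-term semiclassical asymptotics with boundary (cf.\ \cite{Ivr80} and its semiclassical refinements) upgrades the remainder to $o(h^{1-d})$ with boundary term $(2\pi h)^{1-d}\vol'(\partial M)\int_{\RR^{d-1}}n_a(\eta')\,d\eta'$, where $n_a(\eta')$ depends only on $\rho:=|\eta'|$ and is computed from the one-dimensional model obtained by freezing the metric at a boundary point, passing to the normal coordinate $x_1\ge0$, and rescaling $x_1=hy$: it is the regularized integral over $y\in(0,\infty)$ of the difference between the diagonal of $\mathbb 1_{(-\infty,0)}(L_\rho)$ and its whole-line value $\pi^{-1}\sqrt{(1-\rho^2)_+}$, where $L_\rho=-\partial_y^2+(\rho^2-1)$ on $L^2(\RR_+)$ with domain $\{v:\,v'(0)+av(0)=0\}$.

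The core step is evaluating $n_a(\rho)$. In the elliptic region $\rho\ge1$ the whole-line value vanishes and the only negative spectrum of $L_\rho$ is the single Robin bound state $e^{-ay}$, present iff $a>0$ and $\rho<\sqrt{1+a^2}$, so $n_a(\rho)=H(a)\,\mathbb 1_{[1,\sqrt{1+a^2})}(\rho)$. In the hyperbolic region $\rho<1$ one writes the generalized eigenfunctions of $L_\rho$ as $e^{iky}+R(k)e^{-iky}$ with reflection coefficient $R(k)=-(a+ik)/(a-ik)$; since $R(0)=-1$ is real, the $y$-integral is only conditionally convergent, and I would extract its value by inserting a convergence factor $e^{-\varepsilon y}$ and letting $\varepsilon\to0$ (equivalently, truncating to $(0,L)$ with a Dirichlet condition at $L$ and letting $L\to\infty$), obtaining $n_a(\rho)=H(a)-\tfrac14-\tfrac1\pi\arctan\!\bigl(\sqrt{1-\rho^2}/a\bigr)$ (the term $H(a)$ again the bound state; the limits $a=0$ and $a\to\pm\infty$ recover the Neumann $+\tfrac14$ and Dirichlet $-\tfrac14$ boundary densities of \eqref{eq-1-6}). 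Integrating over $\eta'\in\RR^{d-1}$: the bound-state terms give $H(a)\omega_{d-1}\bigl[(1+a^2)^{(d-1)/2}-1\bigr]$ from $\rho\ge1$ together with $H(a)\omega_{d-1}$ from $\rho<1$, combining to $H(a)\omega_{d-1}(1+a^2)^{(d-1)/2}$; the constant over $\{\rho<1\}$ gives $-\tfrac14\omega_{d-1}$; and for the $\arctan$ term I would pass to polar coordinates, substitute $\rho^2=1-\eta^2$, and integrate by parts in $\eta$ (the boundary terms vanishing), turning it into $-\tfrac{\omega_{d-1}}{2\pi}\int_{-1}^1(1-\eta^2)^{(d-1)/2}\,\tfrac{a}{a^2+\eta^2}\,d\eta$. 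Multiplying the sum by $(2\pi)^{1-d}$ gives precisely $\kappa(a)$ as in \eqref{eq-1-20}, and hence \eqref{eq-1-19} with $h=\lambda^{-1}$: this is part~\ref{thm-1-2-ii}.

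For part~\ref{thm-1-2-iii}, specialize to $d=3$, where $(d-1)/2=1$ and the integral is elementary: writing $(1-\eta^2)\tfrac{a}{a^2+\eta^2}=\tfrac{a(1+a^2)}{a^2+\eta^2}-a$ and using $\omega_2=\pi$ produces \eqref{eq-1-21}, after identifying $\arctan(1/a)$ with $\arccot(a)$ (range $(0,\pi)$) for $a>0$ and with $\arccot(a)-\pi$ for $a<0$. Substituting $a=-\cot(\theta/2)$, so $1+a^2=\sin^{-2}(\theta/2)$ and $\arccot(a)=\pi-\theta/2$ for $\theta\in(0,2\pi)$, and using \eqref{eq-1-12}, then gives \eqref{eq-1-22}--\eqref{eq-1-23}.

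The main obstacle I anticipate is the hyperbolic-region model computation: a naive termwise $y$-integration diverges, and one must see that the divergence is purely imaginary (the would-be logarithmic divergence cancelling precisely because $R(0)=-1\in\RR$) in order to isolate the constant $-\tfrac14$ correctly. A secondary point needing care is checking that the two-term theorem indeed covers the $h$-dependent Robin condition $(h\partial_\nu+a)$ with $a$ of order one---which interpolates between the Neumann and Dirichlet cases and has its boundary layer at the natural scale $h$---and that the relevant dynamics is the ordinary billiard flow, so that the stated zero-measure hypothesis is the correct one.
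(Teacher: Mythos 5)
Your proposal is correct and follows essentially the same route as the paper: reduction via Proposition~\ref{prop-1-1}, Ivrii-type one- and two-term semiclassical asymptotics for the Robin realization $P_{a,h}$ under the zero-measure billiard hypothesis, and evaluation of the boundary coefficient by freezing coefficients at the boundary and analyzing the half-line model operator, followed by the same elementary $d=3$ evaluation and substitution $a=-\cot(\theta/2)$. The only difference is in presentation of the model computation: you expand in generalized eigenfunctions with reflection coefficient $R(k)=-(a+ik)/(a-ik)$ and an Abel regularization in $y$ (the $-\tfrac14$ arising from the half delta at $k=0$ because $R(0)=-1$), whereas the paper writes the explicit resolvent kernel and applies Stone's formula, the same $-\tfrac14$ coming from the $(\eta\pm i0)^{-1}$ pole at $\eta=0$; your $n_a(\rho)$, bound-state bookkeeping, and $\eta'$-integration (after the integration by parts) reproduce \eqref{eq-1-20} exactly.
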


\begin{remark}\label{rem-1-3}
\begin{enumerate}[fullwidth, label=(\roman*)]
\item\label{rem-1-3-i}
It looks disheartening that the remainder is just ``$o$'' in comparison with the principal part and only under geometric condition of the global nature but it is the nature of the beast. Consider $M$ a hemisphere; then for $\lambda_n^2= n(n+d-1)$ with $n\in \bZ^+$ the operator $R(\lambda)$ has eigenvalue $0$ of multiplicity $\asymp n^{d-1}$ and therefore we do not have even an asymptotic but just an estimate.
\item\label{rem-1-3-ii}
We can explain this by pointing out that the problem we consider is truly $d$-dimensional, in contrast, for example, to the problem of distribution of negative eigenvalues of $H=\Delta$ with  \begin{equation*}
\frak{D}(H)=\{u\in H^2 (X):\,\partial_\nu u|_{\partial M}+ a \Delta_b^{1/2} u|_{\partial M}=0\}
\end{equation*}
with $\Delta_b$ positive Laplacian on $\partial M$ and $a> 1$ which is in fact $(d-1)$-dimensional.

\item\label{rem-1-3-iii}
On the other hand, calculations of Section~\ref{sect-5} imply that for $a\ne 0$ quantum periodicity will be broken (see \cite{futurebook}, Section \ref{book_new-sect-8-3}) and one can get remainder estimate $o(\lambda^{d-1})$ albeit with the oscillating principal part.

\item\label{rem-1-3-iv}
Under certain assumptions about geodesic billiard flow remainder (see \cite{futurebook}, Section \ref{book_new-sect-7-4}) estimate (\ref{eq-4-2}) could be improved to $O(h^{1-d}/|\log h|)$ or even $O(h^{1-d+\delta})$ with a small exponent $\delta>0$ and therefore remainder estimate (\ref{eq-1-19}) could be improved to $O(\lambda^{d-1}/\log \lambda )$ or even $O(\lambda^{d-1-\delta})$.

\item\label{rem-1-3-v}
Despite different expressions for $a<0$ and $a>0$ one can observe that $\kappa (a)$ is monotone increasing and continuous.

\item\label{rem-1-3-vi}
In particular, $\kappa (0)=\frac{1}{4} (2\pi)^{-d}\omega_{d-1}$ and $\kappa (-\infty)=-\frac{1}{4} (2\pi)^{-d}\omega_{d-1}$. Since $a=0$ corresponds to the Neumann boundary condition and $a = -\infty$ to Dirichlet (see Proposition~\ref{prop-5-1}), this agrees with \eqref{eq-1-4}.

\item\label{rem-1-3-vii}
One can consider eigenvalues of operator $\rho \lambda^{-1}R(\lambda)$ with $\rho>0$ smooth on $\partial M$;  then estimates (\ref{eq-4-1}), (\ref{eq-1-18})  and asymptotics (\ref{eq-4-2}), (\ref{eq-1-19}) hold in the frameworks of Statement \ref{thm-1-2-i} and   \ref{thm-1-2-ii} of Theorem~\ref{thm-1-2} respectively albeit with $\kappa(a) \vol'(\partial M)$ replaced by
\begin{equation*}
\int_{\partial M} \kappa (\rho (x')a)\,d\sigma
\end{equation*}
where $d\sigma$ is a natural measure on $\partial M$;
however without this condition $\rho>0$ problem may be  much more challenging; even self-adjointness is by no means guaranteed.

\item\label{rem-1-3-viii}
Operators of the form $P_{a, h}$ were considered by Frank and Geisinger \cite{FG}. They showed that the trace of the negative part of $P_{a,h}$ has a two-term expansion as $h \to 0$ regardless of dynamical assumptions\footnote{The fact that Frank and Geisinger obtain a second term regardless of dynamical assumptions is simply due to the fact that they study $\operatorname{Tr} f(P_{a,h})$ with $f(\lambda) = -\lambda H(-\lambda)$ ($H$ is the Heaviside function), which is one order smoother than $f(\lambda) = H(-\lambda)$.}, and the second term in their expansion (the $L_d^{(2)}$ term of \cite[Theorem 1.1]{FG}) is closely related to $\kappa(a)$ --- see Remark~\ref{rem:FG}. 
\end{enumerate}
\end{remark}

\begin{remark}\label{rem-1-4}
We can rephrase Theorem~\ref{thm-1-2} in terms of a limiting measure on the unit circle. For each $\lambda > 0$, let $\mu(h)$, $h = \lambda^{-1}$, denote the atomic measure determined by the spectrum of $C(\lambda)$:
\begin{equation}
\mu(h) = (2\pi h)^{d-1} \sum_{e^{i\theta_j} \in \mathrm{spec} C(h^{-1})} \delta(\theta - \theta_j),
\label{eq-1-24}\end{equation}
where we include each eigenvalue according to its multiplicity as usual. Then Theorem~\ref{thm-1-2} can be expressed in the following way: the measures $\mu(h)$ converge in the weak-$*$ topology as $h \to 0$ to the measure
\begin{equation}
\omega_{d-1} \vol'(\partial M) \frac{d}{d\theta} \tilde \kappa(\theta) d\theta \quad \text{on } (0, 2\pi), \text{ that is on } S^1 \setminus \{ 1 \}.
\end{equation}
In particular, this measure is absolutely continuous, and finite away from $e^{i\theta} = 1$ with an infinite accumulation of mass as $\theta \uparrow 2\pi$. In this form, we can compare our result with results on the semiclassical spectral asymptotics of scattering matrices. In \cite{DGHH} and \cite{GHZ}, the scattering matrix $S_h(E)$ at energy $E$ for the Schr\"odinger operator  $h^2 \Delta + V(x)$ on $\RR^d$ was studied in the semiclassical limit $h \to 0$. Assuming that $V$ is smooth and compactly supported, that $E$  is a nontrapping energy level, and that the set of periodic trajectories of the classical scattering transformation on $T^* S^{d-1}$ has measure zero, it was shown that the measure $\mu(h)$ defined by \eqref{eq-1-24} converged weak-$*$ to a uniform measure on $S^1 \setminus \{1 \}$, with an atom of infinite mass at the point $1$. On the other hand, for polynomially decaying potentials, it was shown by Sobolev and Yafaev \cite{SoYa} in the case of central potentials and by Gell-Redman and the first author more generally (work in progress) that there is a limiting measure which is nonuniform, and is qualitatively similar to the measure for $C(h^{-1})$ above in that it is finite away from $1$, with  an infinite accumulation of mass at $1$ from one side.
\end{remark}


\chapter{Reduction to semiclassical spectral asymptotics}
\label{sect-3}

In this section we prove Proposition~\ref{prop-1-1}. This result actually follows directly   from the Birman-Schwinger principle. As some readers may not be familiar with this, we give the details. 

\begin{proof}[Proof of Proposition~\ref{prop-1-1}] 
We begin by recalling that the operator $P_{a, h}$ is the self-adjoint operator associated to the quadratic form \eqref{eq-1-15}, that is, 
\begin{equation*}
Q_{a,h}(u) \Def h^2 \|\nabla u\|_M ^2 - \|u\|_M ^2   - h a  \|u\|^2_{\partial M} .
\end{equation*}
We recall the min-max characterization of eigenvalues: the $n$th eigenvalue $\mu_n(a, h)$ of $P_{a,h}$ is equal to the infimum of 
$$
\sup_{ v  \in V, \| v \| = 1 } Q_{a,h}(v)
$$
over all subspaces $V \in H^1(M)$ of dimension $n$. 
The monotonicity of $Q_{a,h}$ in $a$, for fixed $h$, shows that the eigenvalues are monotone nonincreasing with $a$. In fact, they are strictly decreasing, which follows from the fact that eigenfunctions of $P_{a, h}$ cannot vanish at the boundary. Indeed, the eigenfunctions satisfy the boundary condition $h \partial_\nu  u = -a u$, which shows that if $u$ vanishes at the boundary, so does $\partial_\nu u$, which is impossible. 

The eigenvalues $\mu_n(a, h)$ are thus continuous, strictly decreasing functions of $a$. Let $a_1 < a_2$ be real numbers. The Birman-Schwinger principle \cite[Prop. 9.2.7]{Ivr1} says that the number of negative eigenvalues of $P_{a_2, h}$ is equal to the number of negative eigenvalues of $P_{a_1, h}$, plus the number of eigenvalues $\mu_n(a,h)$ of $P_{a, h}$ that change from nonnegative to negative as $a$ varies from $a_1$ to $a_2$. A diagram makes this clear: see Figure~\ref{evals}. 

\begin{figure}
\centering
\begin{tikzpicture}

\draw[very thick, ->] (-1,0)--(8,0) node[below]{$a$};
\draw[thin,->] (-.7,-4)--(-.7,4) node[right]{$\mu$};
\draw[very thick] (1,-4)--(1,4) node[right]  {$a=a_1$};
\draw[very thick] (6,-4)--(6,4) node[right]  {$a=a_2$};

\clip (-.7,-4) rectangle (8,4);
\foreach \j in {1.75,2, 2.25, 2.5,3, 3.25,3.5,3.75, 4}  \draw (.5,\j*\j-2*\j-2) .. controls (4,\j*\j-2.1*\j-2.5).. (6.5,.8*\j*\j-2*\j-2.5);

\fill (1,-2.53) circle (.075);
\fill (1,-2.1) circle (.075);
\fill (1,-1.54) circle (.075);
\fill (1,-.85) circle (.075);

\fill  (6,-3.45) circle (.075);
\fill  (6,-3.18) circle (.075);
\fill (6,-2.78) circle (.075);
\fill (6,-2.3) circle (.075);
\fill[red] (6,-.99) circle (.075);
\fill[red] (6,-.19) circle (.075);

\fill[red] (4.15,0) circle (.075);
\fill[red] (5.74,0) circle (.075);

\end{tikzpicture}
\caption{%
Diagram showing the variation of eigenvalues $\mu(a, h)$ of $P_{a,h}$ as a function
of $a$ for fixed $h$. The eigenvalues are strictly decreasing in $a$. Consequently, the number of negative eigenvalues of $P_{a_2, h}$ is equal to the number of negative eigenvalues of $P_{a_1, h}$ together with the number that cross the $a$-axis between $a=a_1$ and $a=a_2$.}
\label{evals}\end{figure}
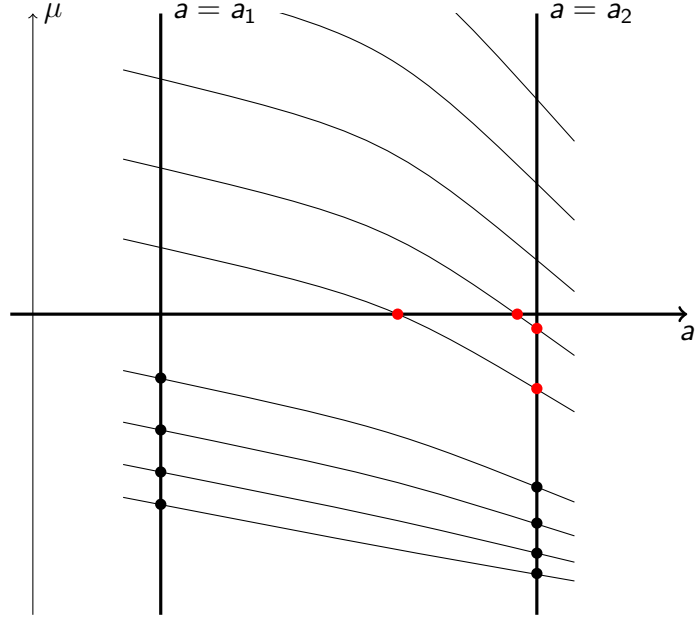

The strict monotonicity of $\mu(a, h)$ in $a$ shows that the number of eigenvalues $\mu_n(a,h)$ of $P_{a, h}$ that change from nonnegative to negative as $a$ varies from $a_1$ to $a_2$ is the same as the number of 
$\mu(a, h)$ (counted with multiplicity) equal to zero, for $a \in [a_1, a_2)$. 
Next, we observe that the space of  eigenfunctions $u_n(a, h)$ of $P_{a, h}$ with zero eigenvalue, i.e. $\mu_n(a, h) = 0$ is in one-to-one correspondence with the space of eigenfunctions of $C(\lambda)$, $\lambda = h^{-1}$, with eigenvalue $(a-i)(a+i)^{-1}$, or equivalently $e^{i\theta}$ where $a=-\cot(\theta/2)$. Indeed, whenever $u_n$ is such an eigenfunction of $P_{a, h}$, then 
\begin{equation}
f \Def \frac1{2} (h \partial_\nu u - iu) \big|_{\partial M}
\label{fu}\end{equation}
is an eigenfunction of $C(\lambda)$, with eigenvalue $(a-i)(a+i)^{-1}$. Conversely, if $f$ is an eigenfunction of $C(\lambda)$ with eigenvalue $(a-i)(a+i)^{-1}$, then by definition there exists a Helmholtz function $u$ such that $u$ is related to $f$ according to \eqref{fu}, and we have $(h \partial_\nu + a)u = 0$ at $\partial M$. 
This completes the proof.
\end{proof}

\begin{remark}\label{rem-3-1}
We can apply similar arguments for $\lambda^{-\delta}\rho^{-1} R(\lambda)$ as $\rho>0$ is a smooth function on $\partial M$ and then plug corresponding
parameters in the boundary conditions coming again to equality (\ref{eq-1-16}).
\end{remark}

We next digress
 to prove that the eigenvalues of $C(\lambda)$ are monotonic (that is, they move monotonically around the unit circle) in $\lambda$. This plays no role in the remainder of our proof, but is (in the authors' opinion) of independent interest. 

\begin{proposition}\label{prop:monotone}
The eigenvalues of $C(\lambda)$ rotate clockwise around the unit circle as $\lambda$ increases. 
\end{proposition}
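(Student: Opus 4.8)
\emph{Proof proposal.} The plan is to realise each eigenvalue $e^{i\theta}$ of $C(\lambda)$ by a Helmholtz function $u$ on $M$ carrying a \emph{real} Robin (impedance) boundary condition, to differentiate the eigenvalue relation in $\lambda$, and to read off the sign of $d\theta/d\lambda$ from two applications of Green's formula. Since $a=-\cot(\theta/2)$ is a strictly increasing function of $\theta\in(0,2\pi)$, it is cleaner to work with the eigenvalue $a$ of $R_\scl(\lambda)$: clockwise rotation of $e^{i\theta}$ means $\theta$ decreasing, i.e.\ $\dot a<0$, where throughout $\,\dot{}\,=d/d\lambda$.

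First I would fix $\lambda_0>0$ and an eigenvalue $e^{i\theta_0}\ne 1$ of $C(\lambda_0)$. As recalled above, $C(\lambda)$ is an analytic family of bounded operators, unitary on the positive real axis; hence by Kato's analytic perturbation theory the finitely many eigenvalues of $C(\lambda)$ near $e^{i\theta_0}$ are, for $\lambda$ near $\lambda_0$, of the form $e^{i\theta_j(\lambda)}$ with $\theta_j$ real-analytic and $\theta_j(\lambda_0)=\theta_0$. By Proposition~\ref{prop-1-1} each branch corresponds, with $a_j(\lambda)=-\cot(\theta_j(\lambda)/2)$ and $h=\lambda^{-1}$, to the eigenvalue $0$ of $P_{a_j(\lambda),h}$; transporting the analytic eigenprojection through the Cayley correspondence \eqref{eq-1-8}--\eqref{eq-1-10} --- which is analytic away from the excluded value $1$ --- produces an analytic family $u=u(\lambda)\in C^\infty(M)$ solving (I suppress the index $j$)
\begin{equation*}
(\Delta-\lambda^2)u=0\ \text{ in } M,\qquad \partial_\nu u=-\lambda a(\lambda)\,u\ \text{ on }\partial M.
\end{equation*}
The boundary trace $u|_{\partial M}$ cannot vanish, for otherwise $u$ and $\partial_\nu u$ would both vanish on $\partial M$ and $u\equiv 0$ by unique continuation --- the same observation used in the proof of Proposition~\ref{prop-1-1}. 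In particular $\|u\|_M,\|u\|_{\partial M}>0$.

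Next I would differentiate in $\lambda$. Put $g=g(\lambda)\Def\lambda a(\lambda)\in\RR$, so that $\partial_\nu u=-g\,u$ on $\partial M$ and $(\Delta-\lambda^2)\dot u=2\lambda u$, $\partial_\nu\dot u=-\dot g\,u-g\,\dot u$ on $\partial M$. Applying Green's second identity for the positive Laplacian (with $\nu$ the interior unit normal) to the pair $(\dot u,u)$, the interior integral telescopes to $2\lambda\|u\|_M^2$, while --- crucially because $g$ is real --- the boundary integral collapses to $-\dot g\,\|u\|_{\partial M}^2$, whence
\begin{equation*}
\dot g=-\frac{2\lambda\,\|u\|_M^2}{\|u\|_{\partial M}^2}.
\end{equation*}
Applying the same identity to $(u,u)$ --- equivalently, using that the quadratic form \eqref{eq-1-15} vanishes on the $0$-eigenfunction $u$ --- gives the energy identity $\|\nabla u\|_M^2=\lambda^2\|u\|_M^2+g\,\|u\|_{\partial M}^2$. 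Since $a=g/\lambda$, so that $\dot a=\dot g/\lambda-g/\lambda^2$, substituting the last two displays yields
\begin{equation*}
\dot a=-\frac{2\|u\|_M^2}{\|u\|_{\partial M}^2}-\frac{\|\nabla u\|_M^2-\lambda^2\|u\|_M^2}{\lambda^2\,\|u\|_{\partial M}^2}=-\frac{\|u\|_M^2}{\|u\|_{\partial M}^2}-\frac{\|\nabla u\|_M^2}{\lambda^2\,\|u\|_{\partial M}^2}<0 .
\end{equation*}
Thus every branch $\theta_j(\lambda)$ is strictly decreasing, which is the assertion.

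I expect the one genuinely delicate point to be the bookkeeping in the second paragraph: confirming that the eigenvalue branches are analytic in $\lambda$ and arise from bona fide Helmholtz functions $u(\lambda)$. This is exactly where the exclusion of the eigenvalue $1$ (equivalently $a=\pm\infty$, equivalently $\lambda^2$ in the Dirichlet spectrum) is needed; away from that value it is standard analytic perturbation theory. One should also watch the sign convention in Green's identity (positive Laplacian, interior normal), since a slip there flips the direction of rotation.
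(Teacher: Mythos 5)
Your argument is correct and is essentially the paper's: both proofs reduce to a first-order perturbation computation for the zero eigenvalue of the Robin problem, carried out with two applications of Green's formula, the paper differentiating $\mu(a,h)$ in $h$ at fixed $a$ and combining with strict monotonicity of $\mu$ in $a$, while you differentiate the branch $a(\lambda)$ directly --- the two computations are related by implicit differentiation and yield the same sign (indeed your formula for $\dot a$ is the quotient of the paper's $\mu'(h)>0$ by $-\partial_a\mu>0$, up to the change $h=\lambda^{-1}$). The only stylistic difference is your explicit appeal to analytic perturbation theory for the unitary family $C(\lambda)$ to justify differentiability of the eigenvalue branch and of $u(\lambda)$, a point the paper leaves implicit when it differentiates $u(h)$.
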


\begin{remark} This implies that the eigenvalues of $R_{\scl}(\lambda)$ are monotone decreasing in $\lambda$. 
\end{remark}

\begin{proof}
As discussed in the previous proof, $C(\lambda)$ has eigenvalue $e^{i\theta}$ if and only if $P_{a, h}$ has a zero eigenvalue, where $a = a(\theta) = - \cot (\theta/2)$. Thus, as a function of $h = \lambda^{-1}$, $\theta(h)$ is defined implicitly by the condition
$$
\mu(a(\theta), h) = 0.
$$
Since $a$ is a strictly increasing function of $\theta$, and we have just seen that $\mu$ is a strictly decreasing function of $a$, it suffices to show that when $\mu = 0$, $\mu$ is a strictly increasing function of $h$, hence a strictly decreasing function of $\lambda$. 

We now compute the derivative of $\mu$ with respect to $h$, at a value of $a$ and $h$ where $\mu(a, h) = 0$. We have 
\begin{multline*}
\frac{d}{dh} \mu(a,h) =  \frac{d}{d h} ( (h^2 \Delta - 1) u(h), u(h) )_M \\[3pt]
= 2h ( \Delta u, u )_M  + ( (h^2 \Delta - 1) u' (h), u(h) )_M + ( (h^2 \Delta - 1) u(h), u' (h) )_M\\[3pt]
= 2h ( \Delta u, u)_M  + ( (h^2 \Delta - 1) u', u )_M - ( u' ,(h^2 \Delta - 1) u)_M.
\end{multline*}
In the third line, we used the fact that $(h^2 \Delta - 1) u = 0$ when $\mu(h) = 0$.
Note the second term is not zero, as $u'$ is not in the domain of the operator due to the changing boundary condition, so we cannot move the operator to the right hand side of the inner product without incurring boundary terms.
We use the Gauss-Green formula to express the last two terms as a boundary integral:
\begin{gather*}
\mu'(h) = 2h ( \Delta u, u )_M    +h(h\partial_\nu  u', u)_{\partial M}  - h(u',  h\partial_\nu u)_{\partial M}\\
\phantom{\mu'(h} = 2h ( \Delta u, u )_M    +h(h\partial_\nu  u', u)_{\partial M}  + ha(u',   u)_{\partial M}.\\
\intertext{Differentiating the boundary condition we find that }
(h\partial_\nu   + a ) u' = -h\partial_\nu  u \text{ at } \partial M.\\
\intertext{Substituting that in we get }
\mu'(h)
= 2h ( \Delta u, u )_M - h(u, \partial_\nu u)_{\partial M}. \\
\intertext{Applying Gauss-Green again, we get }
\mu'(h)
= h ( \Delta u, u )_M + h \| \nabla u\|_M^2  \\
\phantom{\mu'(h) +1+11} = h^{-1} \Big( \| u \|_{L^2(M)}^2 +  \| h\nabla u\|_{L^2(M)}^2 \Big)  > 0.
\end{gather*}
\end{proof}


\chapter{Semiclassical spectral asymptotics}
\label{sect-4}
In this section, we prove Theorem~\ref{thm-1-2}. Essentially, we have arrived at a rather  standard semiclassical spectral asymptotics problem and results are due to \cite{Ivr1}, Chapter 5 or \cite{futurebook}, Chapter \ref{book_new-sect-7}.

\begin{proposition}\label{prop-4-1}
\begin{enumerate}[fullwidth, label=(\roman*)]
\item\label{prop-3-1-i}
Let $\N^-_h (a)$ be as in \eqref{eq-1-17}.
The following asymptotic holds as $h\to +0$:
\begin{equation}
\N^-_h (a) = (2\pi h)^{-d} \omega_d \vol (M) + O(h^{1-d})
\label{eq-4-1}
\end{equation}
\item\label{prop-3-1-ii}
Further, if the set of periodic billiards on $M$ has measure $0$ then as $h\to +0$:
\begin{gather}
\N^-_h (a) = (2\pi h)^{-d} \omega_d \vol (M) +  h^{1-d}  \kappa(a) \vol'(\partial M)   + o(h^{1-d})
\label{eq-4-2}
\end{gather}
with $\kappa(a)$ given by \eqref{eq-1-20}.
\end{enumerate}
\end{proposition}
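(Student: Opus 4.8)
The plan is to recognize $P_{a,h}$ as a semiclassical Schrödinger operator $h^2\Delta - 1$ with the Robin-type boundary condition $(h\partial_\nu + a)u = 0$, and to invoke the general theory of semiclassical spectral asymptotics for such operators with boundary (as in \cite{Ivr1}, Chapter 5, or \cite{futurebook}, Chapter~\ref{book_new-sect-7}). For part \ref{prop-3-1-i}, since $0$ is a noncritical value of the semiclassical symbol $p(x,\xi) = |\xi|_g^2 - 1$ on the interior (indeed $dp \ne 0$ on $\{p=0\}$), the standard Weyl-with-remainder bound gives $\N_h^-(a) = (2\pi h)^{-d}\vol\{p \le 0\} + O(h^{1-d})$, and $\vol\{|\xi|_g^2 \le 1\} = \omega_d \vol(M)$; the Robin boundary condition affects only the $O(h^{1-d})$ term, so \eqref{eq-4-1} follows directly. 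The one point needing care is that the boundary condition is not one of the classical self-adjoint elliptic boundary conditions covered verbatim in older references; one should check it satisfies the Lopatinski–Shapiro/coercivity condition uniformly, which it does because $a$ is a constant (or bounded smooth function) and the condition is a lower-order perturbation of the Neumann condition — this is exactly the setting where the microlocal boundary analysis goes through.

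For part \ref{prop-3-1-ii}, the two-term expansion under the zero-measure condition on periodic billiards is again the standard Ivrii-type result: the coefficient of $h^{1-d}$ is the integral over the boundary of a local quantity depending only on the symbol and the boundary condition, computed from the model problem on the half-space $\mathbb{R}^d_+ = \{x_d > 0\}$ with $\xi' \in \mathbb{R}^{d-1}$ the tangential frequency. Concretely, one freezes coefficients at a boundary point, rescales so the metric is Euclidean, and studies the one-dimensional operator $-\partial_{x_d}^2 + |\xi'|^2 - 1$ on the half-line with boundary condition $(\partial_{x_d} + a)u = 0$ at $x_d = 0$ (after the semiclassical rescaling that removes $h$). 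The boundary contribution to the counting function is then obtained by comparing, mode by mode in $\xi'$, the actual spectral density with the "bulk" Weyl density; this comparison is encoded in a phase shift / scattering-type computation for the half-line problem. Carrying this out produces $\kappa(a)$, and the actual evaluation — splitting into the hyperbolic region $|\xi'| < 1$ (where the half-line operator has a continuous spectrum and one computes a spectral shift) and the elliptic region $|\xi'| > 1$ (where there may be a bound state, contributing the term $H(a)(1+a^2)^{(d-1)/2}$ coming from the eigenvalue $-a^2$ of $-\partial_{x_d}^2$ with this boundary condition when $a>0$) — is the calculation deferred to Section~\ref{sect-4}; it gives precisely \eqref{eq-1-20}. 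The $-1/4$ term is the usual Neumann-type boundary defect, and the integral term is the hyperbolic-region spectral shift $\tfrac{1}{\pi}\arctan$-type contribution integrated against the $(d-1)$-dimensional phase-space measure $(1-|\xi'|^2)^{(d-1)/2}$.

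The main obstacle is twofold. First, one must ensure the general two-term asymptotic theorem genuinely applies to this boundary condition: the hypothesis is about periodic billiards (generalized bicharacteristics of $p$ reflecting at $\partial M$), which for $p = |\xi|_g^2 - 1$ is just the ordinary billiard flow on $M$ at energy $1$, independent of $a$ — so the dynamical condition is clean — but the propagation-of-singularities and boundary parametrix construction must be valid for the Robin condition, which is standard but should be cited carefully. Second, and more substantively, is the explicit model computation giving $\kappa(a)$: correctly accounting for the sign conventions (interior normal, the $-1$ shift, the role of $h$), and in particular correctly capturing the bound-state contribution for $a > 0$ versus its absence for $a \le 0$ (the source of the Heaviside function and of the visibly different formulas for $a<0$ and $a>0$ noted in Remark~\ref{rem-1-3}\ref{rem-1-3-v}). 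I expect the bulk of Section~\ref{sect-4} to be exactly this: a careful but elementary one-dimensional computation, organized so that the hyperbolic-region integral and the elliptic-region eigenvalue assemble into \eqref{eq-1-20}, followed by the $d=3$ evaluation of the elementary integral $\int_{-1}^1 (1-\eta^2)\,\frac{a}{a^2+\eta^2}\,d\eta$ to obtain \eqref{eq-1-21}.
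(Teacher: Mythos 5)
Your proposal follows essentially the same route as the paper: part \ref{prop-3-1-i} and the two-term structure in part \ref{prop-3-1-ii} are obtained from the general Ivrii-type theorems (the paper verifies microhyperbolicity of the operator and of the Robin boundary problem, plus ellipticity/microhyperbolicity in the elliptic zone depending on the sign of $a$, then cites \cite{futurebook}), and $\kappa(a)$ is computed exactly as you outline, by freezing coefficients and analyzing the half-line model operator $T_a=-h^2\partial^2+|\xi'|^2$ with $(h\partial+a)u|_{x_1=0}=0$ via its explicit resolvent and Stone's formula, the hyperbolic-region spectral-shift integral giving the $\arctan$-type term and $-\tfrac14$, and the bound state $-a^2$ for $a>0$ giving the Heaviside term. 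The only small imprecision is that the bound-state contribution $H(a)(1+a^2)^{(d-1)/2}$ comes from integrating over all tangential frequencies with $|\xi'|^2<1+a^2$ (so partly from the hyperbolic region and partly from the strip $1<|\xi'|^2<1+a^2$ of the elliptic zone), not from the elliptic region alone; this does not affect the correctness of the outlined approach.
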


\begin{proof}
One can check easily that the operator $P_{a,h}$ is microhyperbolic at energy level $0$ at each point $(x,\xi)\in T^*M$ in the direction $\xi$; further, the boundary value problem is microhyperbolic at each point $(x'; \xi')\in T^*\partial M$ at energy level $0$ in the multidirection $(\xi', \xi_1^-,\xi_1^+)$ with $\xi_1=\xi_1^\pm $ roots of $\sum g^{jk}\xi_j\xi_k=0$; finally, the boundary value problem is elliptic at each point of the elliptic zone ($\subset T^*\partial M$) if $a\le 0$, and either elliptic or microhyperbolic  in the direction $\xi'$ at each point of the elliptic zone ($\subset T^*\partial M$) if $a> 0$ --- see definitions in Chapters \ref{book_new-sect-2},  \ref{book_new-sect-3} of \cite{futurebook}. Then statements \eqref{eq-1-18}, \eqref{eq-1-19} follow from Theorems \ref{book_new-thm-7-3-11} and \ref{book_new-thm-7-4-1} of \cite{futurebook}.

We now assume that the set of periodic billiards on $M$ has measure zero, and compute the second term in the spectral asymptotic explicitly. Similar calculations appear in \cite{FG}. 

To do this, one can use method of freezing coefficients (see f.e.  \cite{futurebook}, \ref{book_new-sect-7-2})  which results in
\begin{equation}
h^{1-d} \kappa (a)=\int_{\bR^+} \bigl(e (0, x_1; 0, x_1;1) - (2\pi h )^{-d}\omega_d \bigr)\, dx_1
\label{eq-4-3}
\end{equation}
where $e(x',  x_1; y', y_1;\tau )$ is the Schwartz kernel of the spectral projector $E(\tau)$ of the operator
$H_a=h^2 \Delta$ in half-space $\bR^{d-1}\times \bR^+\ni (x',x_1)$ with domain $\mathfrak{D}(H_a)= \{u \in H^2:\ (h\partial_{x_1} +a)u|_{x_1=0}=0\}$.

We obtain this spectral projector by integrating the spectral measure. This in turn is obtained via Stone's formula
\begin{equation}
dE_L(\sigma) = \frac{1}{2\pi i}\Big( (L - (\sigma + i0))^{-1} - (L - (\sigma - i0)^{-1} \Big) \, d\sigma.
\label{eq-4-4}\end{equation}

Consider the resolvent for $H_a$, $(H_a - \sigma)^{-1}$, for $\sigma \in \CC \setminus \RR$. Using the Fourier transform in the $x'$ variables, we can write the Schwartz kernel of this resolvent in the form
\begin{equation}
(2\pi h)^{1-d} \int e^{i(x'-y') \cdot \xi'} (T_a + |\xi'|^2 - \sigma)^{-1}(x_1, y_1) \, d\xi'.
\label{eq-4-5}
\end{equation}
Here $T_a$ is the one-dimensional operator $T_a=-h^2\partial ^2 +|\xi'|^2$ on $L^2(\RR_+)$ under the boundary condition $(h\partial +a)u|_{x_1=0}=0$.  The spectral projector $E_{H_a}(1)$ is therefore given by
\begin{equation}
(2\pi h)^{1-d} \int_{-\infty}^1 \int e^{i(x'-y') \cdot \xi'} dE_{T_a}(\sigma - |\xi'|^2)(x_1, y_1) \, d\xi' \, d\sigma.
\label{eq-4-6}\end{equation}

Thus, we need to find the spectral measure for $T_a$. Write $\sigma - |\xi'|^2 = \eta^2$, where we take $\eta$ to be in the first quadrant of $\CC$ for $\Im \sigma > 0$, and in the fourth quadrant for $\Im \sigma < 0$.

\begin{lemma}\label{lemma-4-2}
Suppose that $\Im \eta > 0$ and $\Re \eta \geq 0$. Then the resolvent kernel $(T_a - \eta^2)^{-1}$ takes the form
\begin{equation}
(T_a - \eta^2)(x, y) = \begin{cases}
\frac{i}{2h\eta} \Big( e^{i\eta(x-y)/h} + \frac{i\eta - a}{i\eta + a} e^{i\eta(x+y)/h} \Big) , \quad x > y \\[3pt]
\frac{i}{2h\eta} \Big( e^{i\eta(y-x)/h} + \frac{i\eta - a}{i\eta + a} e^{i\eta(x+y)/h} \Big), \quad x < y .
\end{cases}\label{eq-4-7}\end{equation}
If $\Im \eta < 0$ and $\Re \eta \geq 0$, then the resolvent kernel $(T_a - \eta^2)^{-1}$ takes the form
\begin{equation}
(T_a - \eta^2)(x, y) = \begin{cases}
-\frac{i}{2h\eta} \Big( e^{i\eta(y-x)/h} + \frac{i\eta + a}{i\eta - a} e^{-i\eta(x+y)/h} \Big), \quad x > y \\[3pt]
-\frac{i}{2h\eta} \Big( e^{i\eta(x-y)/h} + \frac{i\eta + a}{i\eta - a} e^{-i\eta(x+y)/h} \Big), \quad x < y .
\end{cases}\label{eq-4-8}\end{equation}
\end{lemma}

\begin{proof}
In the regions $x < y$ and $x > y$, the resolvent kernel must be a linear combination of $e^{i\eta x/h}$ and $e^{-i\eta x/h}$.
Moreover, for $\Im \eta > 0$, we can only have the $e^{+i\eta x/h}$ term, as $x \to \infty$, as the other would be exponentially increasing.
So we can write the kernel in the form
\begin{equation}
\left\{\begin{aligned}
& c_1 e^{+i\eta x/h}, &&x > y \\
&c_2 e^{+i\eta x/h} + c_3 e^{-i\eta x/h}, &&x < y.
\end{aligned}\right.
\label{eq-4-9}\end{equation}
We apply the boundary condition, and the two connection conditions at $x=y$, namely  continuity, and a jump in the derivative of $-1/h$, in order to obtain the delta function $\delta(x-y)$ after applying $T_a$. These three conditions determine the $c_i$ uniquely, and we find that, in the case $\Im \eta > 0$,
\begin{phantomequation}\label{eq-4-10}\end{phantomequation}
\begin{align}
&c_1 = \frac{i}{2h\eta} \big( e^{-i\eta y/h} + \frac{i\eta-a}{i\eta + a}
 e^{+i\eta y/h}\big),\label{eq-4-10-1}\tag*{$(\ref*{eq-4-10})_1$}\\
&c_2 =  \frac{i}{2h\eta} \frac{i\eta-a}{i\eta + a}  e^{+i\eta y/h}, \qquad c_3 = \frac{i}{2h\eta} e^{+i\eta y/h},
\label{eq-4-10-2}\tag*{$(\ref*{eq-4-10})_{2,3}$}
\end{align}
yielding \eqref{eq-4-7}. A similar calculation yields \eqref{eq-4-8}.
\end{proof}

We now apply \eqref{eq-4-4} to find the Schwartz kernel of the spectral measure for $T_a$.

\begin{lemma}\label{lemma-4-3}
The spectral measure $dE_{T_a}(\tau)$ is given by the following.
\begin{enumerate}[label=(\roman*), fullwidth]
\item \label{lemma-4-3-i}
For $\tau \geq 0$, $\tau = \eta^2$
\begin{multline}
dE_{T_a}(\tau)\\=\frac{1}{4\pi h \eta} \Big( e^{i\eta(x-y)/h} + e^{i\eta(y-x)/h} + \frac{i\eta - a}{i\eta + a} e^{i\eta(x+y)/h} + \frac{i\eta + a}{i\eta - a} e^{-i\eta(x+y)/h}  \Big) \, 2\eta d\eta.
\label{eq-4-11}\end{multline}
\item\label{lemma-4-3-ii}
For $\tau < 0$,  the spectral measure $dE(\tau)$ vanishes for $a \leq 0$, while for $a > 0$
\begin{equation}
dE_{T_a}(\tau)=\frac{2a}{h} e^{-ax/h}e^{-ay/h} \delta(\tau + a^2) d\tau.
\label{eq-4-12}\end{equation}
\end{enumerate}
\end{lemma}

\begin{proof} This follows directly from Lemma~\ref{lemma-4-2} and Stone's formula, \eqref{eq-4-4}. The extra term for $a > 0$ arises from the pole in the denominator, $i\eta + a$ for $\Im \eta > 0$ and $i\eta - a$ for $\Im \eta < 0$ in the expressions \eqref{eq-4-7}, \eqref{eq-4-8}, which only occurs for $a >0$. For $\tau$ negative, we need to set $\eta = i \sqrt{-\tau} + 0$ in \eqref{eq-4-7} and $\eta = - i\sqrt{-\tau} + 0$ in \eqref{eq-4-8}, and subtract. Then everything cancels except at the pole, where we obtain a delta function $-2\pi i\delta(\sqrt{-\tau} - a)$,  which arises from $(\sqrt{-\tau} + i0 + a)^{-1} - (\sqrt{-\tau} - i0 + a)^{-1}$. This term arises from the negative eigenvalue $-a^2$ which occurs for $a > 0$, corresponding to the eigenfunction $\sqrt{2a/h} \, e^{-ax/h}$.
\end{proof}

Plugging this into \eqref{eq-4-6}, and making use of the fact that $d\sigma d\xi'= 2\eta d\eta d\xi'$, we find that the Schwartz kernel of $E_{H_a}(1)$ is given by
\begin{multline}
(2\pi h)^{-d} \int_{0}^1 \int H(1 - |\xi'|^2 - \eta^2) e^{i(x'-y') \cdot \xi'} \\
\times \Big( e^{i\eta(x_1-y_1)/h} + e^{i\eta(y_1-x_1)/h} + \frac{i\eta - a}{i\eta + a} e^{i\eta(x_1+y_1)/h} + \frac{i\eta + a}{i\eta - a} e^{-i\eta(x_1+y_1)/h}  \Big)  \, d\xi' \,  d\eta
\label{eq-4-13}\end{multline}
for $a \leq 0$ while for $a > 0$, it is given by the sum of \eqref{eq-4-13} and
\begin{multline}
(2\pi h)^{1-d} \frac{2a}{h} e^{-ax_1/h} e^{-ay_1/h} \int_{-\infty}^1 \int e^{i(x' - y') \cdot \xi'} \delta(\sigma - |\xi'|^2 + a^2) \, d\xi' \, d\sigma .
\label{eq-4-14}
\end{multline}
We are actually interested in the value on the diagonal. Setting $x=y$, and performing the trivial $\xi'$ integral, we find that the Schwartz kernel of the spectral projector $E_{H_a}(1)(x,x)$ on the diagonal is given by
\begin{multline}
\frac{\omega_{d-1}}{ (2\pi h)^{d}} \int_{0}^1 (1 - \eta^2)^{(d-1)/2} \Bigg( 2 + \frac{i\eta - a}{i\eta + a} e^{2i\eta x_1/h}  + \frac{i\eta + a}{i\eta - a} e^{-2i\eta x_1/h}  \Bigg)   \,  d\eta  \\
+ H(a) \frac{(d-1) \omega_{d-1}}{ (2\pi h)^{d-1}} \frac{a}{h} e^{-2ax_1/h} \int_{-a^2}^1 (\sigma+a^2)^{(d-3)/2} \, d\sigma .
\label{eq-4-15}\end{multline}
Since
\begin{equation*}
\omega_{d-1}\int_0^1 2(1 - \eta^2)^{(d-1)/2} \, d\eta = \omega_d,
\end{equation*}
we see by comparing with \eqref{eq-4-3} that this term disappears in the expression for $\kappa(a)$ and we have, after performing the $x_1$ integral as in \eqref{eq-4-3}
\begin{multline}
h^{1-d} \kappa(a) = \frac{\omega_{d-1}}{ (2\pi h)^{d}} \int_{0}^1 (1 - \eta^2)^{(d-1)/2} \\
\shoveright{\times \Bigg( \frac{i\eta - a}{i\eta + a} \Big( \frac{ih}{2} (\eta + i0)^{-1} \Big) \ - \frac{i\eta + a}{i\eta - a}  \Big( \frac{ih}{2} (\eta - i0)^{-1} \Big) \Bigg)   \,  d\eta  }\\
+ H(a) \frac{(d-1) \omega_{d-1}}{2 (2\pi h)^{d-1}}  \int_{-a^2}^1 (\sigma+a^2)^{(d-3)/2} \, d\sigma .
\label{eq-4-16}\end{multline}
Simplifying a bit, and performing the $\sigma$ integral, we have
\begin{multline}
\kappa(a) = -\frac{i\omega_{d-1}}{2 (2\pi)^{d}} \int_{-1}^1 (1 - \eta^2)^{(d-1)/2}  \frac{(i\eta - a)^2}{a^2 + \eta^2} (\eta + i0)^{-1}   \,  d\eta  \\
+ H(a) \frac{\omega_{d-1}}{ (2\pi)^{d-1}}  (1+a^2)^{(d-1)/2}  .
\label{eq-4-17}\end{multline}
We further simplify this expression by expanding $(i\eta - a)^2 = a^2 - 2ia\eta - \eta^2$, and noting that the contribution of the $-\eta^2$ term is zero, as this gives an odd integrand in the $\eta$ integral. A similar statement can be made for the $a^2$ term, except that there is a contribution from the pole in this case. This leads to the expression
\begin{multline}
\kappa(a) = \frac{\omega_{d-1}}{(2\pi)^{d-1}} \bigg( -\frac{1}{2\pi} \int_{-1}^1 (1 - \eta^2)^{(d-1)/2}  \frac{a}{a^2 + \eta^2}  \,  d\eta  \\
- \frac{1}{4} + H(a)   (1+a^2)^{(d-1)/2} \bigg) .
\label{eq-4-18}\end{multline}
Although not immediately apparent, this formula is continuous at $a=0$. In fact, the function $a (a^2 + \eta^2)^{-1}$ has a distributional limit $(\operatorname{sgn} a) \pi \delta(\eta)$ as $a$ tends to zero from above or below.
The change of sign as $a$ crosses $0$ means that the integral in \eqref{eq-4-18} has a jump of $-1$ as $a$ crosses zero from negative to positive. That exactly compensates the jump in the final term.

In odd dimensions, we can compute this integral exactly. In particular, in dimension $d=3$, we find that
\begin{equation}
\kappa(a) = \frac{\omega_2}{(2\pi)^2} \Big( -\frac{1}{4} + \frac{a}{\pi} + (1+a^2) \big(1 - \frac{\arccot a}{\pi} \big) \Big).
\label{eq-4-19}
\end{equation}
\end{proof}

\begin{proof}[Proof of Theorem~\ref{thm-1-2}] This follows immediately from Proposition~\ref{prop-4-1} and Proposition~\ref{prop-1-1}.
\end{proof}

\begin{remark}\label{rem:FG} The second term of the expansion  in \cite[Theorem 1.1]{FG} is obtained by computing
\begin{equation}
(2\pi h)^{1-d} \int_{-\infty}^1 (1-\sigma) \int e^{i(x'-y') \cdot \xi'} dE_{T_a}(\sigma - |\xi'|^2)(x_1, y_1) \, d\xi' \, d\sigma
\end{equation}
instead of \eqref{eq-4-6}. 
\end{remark}

\chapter{Relation to Dirichlet boundary condition}
\label{sect-5}

In this section we observe that the limit $a \to -\infty$ corresponds to the Dirichlet boundary condition. More precisely, we have

\begin{proposition}\label{prop-5-1} Let $\N_h^-(-\infty)$ denote the limit
\begin{equation*}
\N_h^-(-\infty) \Def \lim_{a \to -\infty} \N_h^-(a),
\end{equation*}
where $\N_h^-(a)$ is given by \eqref{eq-1-17}. Then we have
\begin{equation}
\N_h^-(-\infty) =
\# \{ \lambda_j \leq h^{-1} \mid \lambda^2_j
\text{ is a Dirichlet eigenvalue of  } \Delta \}.
\label{eq-5-1}
\end{equation}
\end{proposition}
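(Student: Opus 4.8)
The plan is to show that, for each fixed $h>0$, the eigenvalues $\mu_n(a,h)$ of $P_{a,h}$ increase, as $a\to-\infty$, to the $n$-th eigenvalue of the Dirichlet realization of $h^2\Delta-1$, namely $h^2\lambda_n^2-1$, where $0<\lambda_1^2\le\lambda_2^2\le\cdots$ are the Dirichlet eigenvalues of $\Delta$ repeated according to multiplicity; the proposition then follows by bookkeeping. Recall from the proof of Proposition~\ref{prop-1-1} that $\mu_n(a,h)$ is the min-max value of the form $Q_{a,h}(u)=h^2\|\nabla u\|_M^2-\|u\|_M^2-ha\|u\|_{\partial M}^2$ over $n$-dimensional subspaces of $H^1(M)$, that these values are continuous and strictly decreasing in $a$, and hence that $\N_h^-(a)=\#\{n:\mu_n(a,h)<0\}$. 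Since $h>0$, the term $-ha\|u\|_{\partial M}^2$ is nonnegative for $a\le 0$ and increases as $a$ decreases, so each $\mu_n(a,h)$ is strictly increasing as $a\to-\infty$; moreover $Q_{a,h}(u)\ge h^2\|\nabla u\|_M^2-\|u\|_M^2\ge-\|u\|_M^2$ for $a\le 0$, so $\mu_n(a,h)\ge-1$ uniformly in $a\le 0$ and the limit $\mu_n^\infty:=\lim_{a\to-\infty}\mu_n(a,h)$ exists.

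For the upper bound $\mu_n^\infty\le h^2\lambda_n^2-1$, I would restrict the min-max for $P_{a,h}$ to $n$-dimensional subspaces $V\subset H^1_0(M)$: on such $V$ the boundary term drops out, $Q_{a,h}|_V$ is the Dirichlet form $h^2\|\nabla u\|_M^2-\|u\|_M^2$, and the infimum over these $V$ equals the $n$-th Dirichlet eigenvalue $h^2\lambda_n^2-1$; being an infimum over a smaller class, $\mu_n(a,h)\le h^2\lambda_n^2-1$ for every $a$. The matching lower bound is the substance of the argument, and I would get it by compactness: fix a sequence $a_k\to-\infty$ and let $S_n(a_k)\subset H^1(M)$ be the span of the first $n$ eigenfunctions of $P_{a_k,h}$, with $L^2$-orthonormal basis $u_1^k,\dots,u_n^k$. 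On $S_n(a_k)$ one has $Q_{a_k,h}(v)\le\mu_n(a_k,h)\|v\|_M^2\le\mu_n^\infty\|v\|_M^2$, and since $-ha_k\|v\|_{\partial M}^2\ge 0$ this yields both $h^2\|\nabla v\|_M^2\le(\mu_n^\infty+1)\|v\|_M^2$ and $h(-a_k)\|v\|_{\partial M}^2\le(\mu_n^\infty+1)\|v\|_M^2$ on $S_n(a_k)$; hence the $u_i^k$ are bounded in $H^1(M)$ while their boundary traces tend to $0$ in $L^2(\partial M)$. Using the compact embedding $H^1(M)\hookrightarrow L^2(M)$ and boundedness of the trace, pass to a subsequence with $u_i^k\rightharpoonup u_i$ weakly in $H^1$ and strongly in $L^2(M)$; then the $u_i$ are $L^2$-orthonormal, hence span an $n$-dimensional $V$, and their traces vanish, so $V\subset H^1_0(M)$. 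For $v=\sum c_iu_i\in V$ and $v_k=\sum c_iu_i^k\in S_n(a_k)$, weak lower semicontinuity of $\|\nabla\cdot\|_M^2$ and strong $L^2$ convergence give $h^2\|\nabla v\|_M^2-\|v\|_M^2\le\liminf_k\bigl(h^2\|\nabla v_k\|_M^2-\|v_k\|_M^2\bigr)\le\liminf_k Q_{a_k,h}(v_k)\le\mu_n^\infty\|v\|_M^2$, and applying the Dirichlet min-max to this $V$ gives $h^2\lambda_n^2-1\le\mu_n^\infty$. Thus $\mu_n^\infty=h^2\lambda_n^2-1$.

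Finally I would read off the counting function from the relation $\mu_n(a,h)\uparrow h^2\lambda_n^2-1$ as $a\to-\infty$, with the convergence strict. If $h^2\lambda_n^2-1<0$ then $\mu_n(a,h)<0$ for all $a\le 0$; if $h^2\lambda_n^2-1=0$ then $\mu_n(a,h)<0$ for every finite $a$ since the limit is approached strictly from below; and if $h^2\lambda_n^2-1>0$ then $\mu_n(a,h)>0$ once $a$ is sufficiently negative. Consequently, for all sufficiently negative $a$ the index $n$ is counted in $\N_h^-(a)$ exactly when $h^2\lambda_n^2-1\le 0$, and there are only finitely many such $n$; therefore $\N_h^-(-\infty)=\#\{n:h^2\lambda_n^2\le1\}=\#\{\lambda_j\le h^{-1}:\lambda_j^2\text{ a Dirichlet eigenvalue of }\Delta\}$, which is \eqref{eq-5-1}. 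The non-strict inequality $\le$ (rather than $<$) is precisely the contribution of indices with $h^2\lambda_n^2-1=0$.

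The main obstacle is the lower bound $\mu_n^\infty\ge h^2\lambda_n^2-1$, i.e., verifying that the penalty term $-ha\|u\|_{\partial M}^2$ genuinely enforces the Dirichlet condition in the limit and permits no spectral escape downward; the compactness argument above handles this, but one must be careful that the limiting subspace $V$ is truly $n$-dimensional (this uses $L^2$-orthonormality, which survives strong $L^2$ convergence) and that the boundary traces converge to $0$ in $L^2(\partial M)$ (this rests on the uniform bound $Q_{a,h}\ge-1$ for $a\le 0$, hence a uniform bound $\mu_n^\infty+1$ on the forms restricted to $S_n(a_k)$). An alternative packaging is to observe that $P_{a,h}$ converges to the Dirichlet realization of $h^2\Delta-1$ in the strong resolvent sense as $a\to-\infty$ and invoke monotonicity of the $\mu_n(a,h)$ in $a$ to exclude eigenvalues vanishing to $-\infty$, but the direct quadratic-form argument seems cleanest.
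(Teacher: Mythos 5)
Your argument is correct, but it follows a genuinely different route from the paper. You prove pointwise convergence of each eigenvalue, $\mu_n(a,h)\uparrow h^2\lambda_n^2-1$ as $a\to-\infty$ (the upper bound by restricting the min--max to $H^1_0(M)$, the lower bound by a compactness argument: uniform $H^1$ bounds and decay of boundary traces for the span of the first $n$ Robin eigenfunctions, weak $H^1$/strong $L^2$ limits landing in an $n$-dimensional subspace of $H^1_0(M)$), and then read off the counting function, using the strict monotonicity of $\mu_n(\cdot,h)$ --- established in the paper's proof of Proposition~\ref{prop-1-1} and legitimately cited by you --- to handle the threshold case $h^2\lambda_n^2=1$ and obtain the non-strict inequality $\lambda_j\le h^{-1}$ in \eqref{eq-5-1}. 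The paper instead compares the two counting functions directly as maximal dimensions of negative subspaces of the quadratic forms: the inequality $\tilde\N_D(h^{-1})\le\N_h^-(-\infty)$ is obtained by perturbing the span of Dirichlet eigenfunctions by a small multiple of a function $s\in H^1_0(M)^\perp$ chosen via \eqref{eq-5-3}--\eqref{eq-5-5} to make $Q_a$ strictly negative definite in the borderline case, and the reverse inequality by contradiction, using the decomposition $w=w'+s$ and a potential-theoretic bound $\|s\|_{L^2(M)}\le C\|s\|_{L^2(\partial M)}$ via layer potentials. Your approach buys more information (convergence of the Robin eigenvalues themselves, essentially strong resolvent convergence of $P_{a,h}$ to the Dirichlet realization) and avoids both the layer-potential estimate and the perturbation construction, which the paper only carries out for simple threshold eigenvalues; its one point of reliance is the strict decrease of $\mu_n(a,h)$ in $a$, which plays for you exactly the role that the $\epsilon s$ perturbation plays in the paper. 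The final bookkeeping step is sound: indices with $h^2\lambda_n^2-1<0$ are counted for every $a$, threshold indices are counted because $\mu_n(a,h)<\mu_n^\infty=0$ strictly for finite $a$, and the first index with $h^2\lambda_n^2-1>0$ eventually has $\mu_n(a,h)>0$, which by the ordering of eigenvalues caps $\N_h^-(a)$ for $a$ sufficiently negative.
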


\begin{remark}\label{rem-5-2}
 Because the quadratic form \eqref{eq-1-15} is monotone in $a$, the counting function $\N_h^-(a)$ is monotone in $a$. Hence the limit above exists.
\end{remark}

\begin{proof} We use the min-max characterisation of eigenvalues.
Let $\tilde{\N}_D(\lambda)$ denote the number of Dirichlet eigenvalues (counted with multiplicity) less than or equal to $\lambda = h^{-1}$. This is equal to the maximal dimension of a subspace of $H^1_0(M)$ on which the quadratic form $Q_D$, given by
\begin{equation}
Q_D(u, u) = h^2 \| \nabla u \|_2^2 - \| u \|_2^2
\label{eq-5-2}\end{equation}
is negative semidefinite. On the other hand, $\tilde{\N}_h^-(a)$ is equal to the maximal dimension of a subspace of $H^1(M)$ on which the quadratic form $Q_a$ given by \eqref{eq-1-15} is (strictly) negative definite.

We first show that $\tilde{\N}_D(h^{-1}) \leq \tilde{\N}_h^-(-\infty)$.
Let $V$ be the vector space spanned by Dirichlet eigenfunctions with eigenvalue $\leq \lambda^2$.
Clearly, the quadratic form $Q_a$ is negative \emph{semi}definite on $V$, and if $\lambda^2$ is not a Dirichlet eigenvalue, then it is negative definite, proving the assertion. In the case that $\lambda^2$ is a Dirichlet eigenvalue, we perturb $V$ to $V_\epsilon$, a vector space of $H^1(M)$ of the same dimension as $V$, so that, for  for $\epsilon$ sufficiently small depending on $a$,  $Q_a$ is negative definite on $V_\epsilon$. For simplicity we only do this in the case that the $\lambda^2$-eigenspace is one dimensional, leaving the general case to the reader.
To do this, we choose an orthonormal basis of $V$ (with respect to the $L^2$ inner product) of Dirichlet eigenfunctions $v_1, \dots, v_k$ with eigenvalues $\lambda_1^2 \dots \lambda_k^2$, where $\lambda_k = \lambda$.
Then we perturb only $v_k$, leaving the others fixed.
We choose $s \in H^1_0(M) ^\perp$, the orthogonal complement of $H^1_0(M)$ in $H^1(M)$ (with respect to the inner product in $H^1(M)$), so that \begin{equation}
Q_a(v_i, s) = 0, \quad i < k \text{ and } Q_a(v_k, s) > 0.
\label{eq-5-3}\end{equation}
We check that this is possible. Notice that $s \in H^1_0(M) ^\perp$ implies that $(\Delta +1) s = 0$ in $M$. Then as $v_i$ has zero boundary data, we have
\begin{equation}
(\lambda_i^2 + 1) ( v_i, s)_M = ( \Delta v_i, s )_M - (  v_i, \Delta s )_M \\
= \langle \partial_\nu v_i, s \rangle_{\partial M} .
\label{eq-5-4}\end{equation}
We choose $s$ so that $\langle \partial_\nu v_i, s \rangle_{\partial M}$ vanishes for   $i < k$ and
is positive for $i=k$. This is  possible: in fact, due to the unique solvability of the boundary value problem
\begin{equation}
(\Delta + 1)s = 0, \quad s |_{\partial M} = f \in H^{1/2}(M),
\label{eq-5-5}\end{equation}
for $s \in H^1(M)$, we see that
$s$ can have any boundary value in $H^{1/2}(\partial M)$ which is dense in $L^2(\partial M)$. Then using \eqref{eq-5-4} we see that $\langle \partial_\nu v_k, s \rangle_{\partial M} > 0$ implies that $( v_k, s )_M >0$.

We now define $V_\epsilon$ to be the span of $v_1, \dots, v_{k-1}$ and $v_k + \epsilon s$.
Then we have
\begin{equation}
Q_a(v_i, v_k + \epsilon s) = 0, \ i < k
\label{eq-5-6}
\end{equation}
and
\begin{align}
Q_a(v_k+ \epsilon s,v_k + \epsilon s ) &=
Q_a(v_k, v_k) + 2\epsilon Q_a(v_k, s) + \epsilon^2 Q_a(s, s) \label{eq-5-7}\\
&= 2\epsilon Q_a(v_i, s_i) + \epsilon^2 Q_a(s_i, s_i) \notag\\
&= - 2 \epsilon (h^2 + 1) (v_i, s_i)_M + O(\epsilon^2 a^2)\notag
\end{align}
which is strictly negative for $\epsilon a^2$ small enough. It follows that $Q_a$ is negative definite on $V_k$ when $\epsilon a^2$ is small enough. A similar construction can be made when $\lambda^2$ has multiplicity greater than $1$.

We next show that $\tilde{\N}_D(h^{-1}) \geq \N_h^-(-\infty)$. We argue by contradiction: if not, then for any $a$, there is a vector space $W$ of dimension $\geq k+1$ on which $Q_a$ is negative definite. Then there is a nonzero vector $w \in W$ orthogonal (in the $H^1(M)$ inner product) to $V$. We can write $w = w' + s$ where $w' \in H^1_0(M)$ and $s \in H^1_0(M)^\perp$. Then $w'$ is a linear combination of Dirichlet eigenfunctions with eigenvalue $\geq \lambda'> \lambda$, where $\lambda'$ is the smallest eigenvalue larger than $\lambda$. We then have
\begin{multline}
0 > Q_a(w' + s, w' + s) = Q_a(w', w') + 2 Q_a(w', s) + Q_a(s, s) \\
\geq (\lambda' - \lambda) \| w' \|_2^2 - 2(h^2 + 1) \| w' \|_2 \| s \|_{L^2(M)} - h a \| s \|_{L^2(\partial M)}^2 .
\label{eq-5-8}
\end{multline}
However, some standard potential theory shows that $\| s \|_{L^2(M)}$ is bounded by a constant times $\| s\|_{L^2(\partial M)}$. To see this, extend $M$ to a larger manifold $\tilde M$ of the same dimension, and let $G(x,y)$ be the Schwartz kernel of the inverse of $(\Delta_{\tilde M} + 1)^{-1}$ on $L^2(\tilde M)$, with Dirichlet boundary conditions at $\partial \tilde M$. We can write $s$ as $\int_{\partial M} d_{\nu_y} G(x, y) h(y) \, dy$ where $(1/2 + D)h = s |_{\partial M}$ and $D$ is the double layer operator on $\partial M$ determined by $G$. Standard arguments show that $(1/2 + D)$ has a bounded inverse on $L^2(\partial M)$ and $d_{\nu_y} G(x, y)$ is a bounded integral operator from $L^2(\partial M)$ to $L^2(M)$.
So we can write, for $a < 0$,
\begin{multline*}
0 > Q_a(w' + s, w' + s)  \\
\geq (\lambda' - \lambda) \| w' \|_2^2 - 2C(h^2 + 1) \| w' \|_2 \| s \|_{L^2( \partial M)}  + h |a| \| s \|_{L^2(\partial M)^2}^2
\end{multline*}
and the RHS is clearly positive for $|a|$ large enough, giving us the desired contradiction.

\end{proof}


\providecommand{\bysame}{\leavevmode\hbox to3em{\hrulefill}\thinspace}

\vglue .06truein

\begin{tabular}{lll}
Andrew Hassell, &{\hskip 70 pt}  &Victor Ivrii \\
Mathematical Sciences Institute,&& Department of Mathematics\\
Australian National University,&&University of Toronto,\\
&&40, St.George Str.,\cr
Canberra, ACT 0200&&Toronto, Ontario M5S 2E4\cr
Australia&&Canada\cr
Andrew.Hassell@anu.edu.au&&ivrii@math.toronto.edu\cr
\end{tabular}

\end{document}